\newcommand     {\Rset}    {{\mathbb R}}
\newcommand{\SL}{{\mathrm {SL}}}
\newcommand     {\mC}    {{\mathbb C}}
\newcommand     {\mR}    {{\mathbb R}}
\newcommand     {\gog}    {{\frak g}}
\newcommand     {\got}    {{\frak t}}
\newcommand     {\gos}    {{\frak s}}
\newcommand     {\gou}    {{\frak u}}
\newcommand     {\goh}    {{\frak h}}
\newcommand     {\gom}    {{\frak m}}
\newcommand     {\gra}    {{\alpha}}
\newcommand     {\grb}    {{\beta}}
\newcommand     {\lra}   {{\longrightarrow}}
\newcommand     {\G}    {{\frak g}}
\newcommand     {\id}   {\mathrm{id}}
\newtheorem{thm}{Theorem}[section]
\newtheorem{lemma}{Lemma}[section]
\newtheorem{prop}{Proposition}[section]
\newtheorem{cor}{Corollary}[section]
\theoremstyle{definition}
\DeclareMathOperator{\ad}{ad}
\DeclareMathOperator{\Ad}{Ad}
\DeclareMathOperator{\comm}{comm}
\DeclareMathOperator{\Comm}{Comm}
\numberwithin{equation}{section}
\begin{document}

\title[Small commutators in compact semisimple Lie groups and Lie algebras]{Small commutators in compact semisimple\\ Lie groups and Lie algebras}
\author[A.~D'Andrea]{Alessandro D'Andrea}
\author[A.~Maffei]{Andrea Maffei}

\date{\today}

\maketitle

\tableofcontents

\section{Introduction}


Let $G$ be a connected semisimple (Lie or algebraic) group. Then $G$ equals its derived subgroup and it is expected that, in many cases, every element of $G$ is indeed a commutator. The problem of understanding under what conditions this claim holds, or at least every element can be expressed as a product of a uniformly bounded quantity of commutators, has been investigated at length.

The fact that every element in a semisimple compact Lie group is a commutator dates back to Got\^o \cite{G}, whereas counterexamples are easy to construct in non compact cases --- for instance, in $\SL_2(\Rset)$, $-\id$ does not arise as a commutator. Later, Thompson \cite{T} provided a classification of all groups of the form $\SL_n(k),$ where $k$ is an arbitrary field, containing elements that are not commutators.

Connected semisimple groups are treated in the complex case in \cite{PW}, and in \cite{R} over an algebraically closed field of any characteristic. More recently, \DH okovi\'c showed \cite{D}, under mild technical assumptions, that in the real semisimple case every element is a product of at most two commutators.

Many variations on the topic have also been considered. To name just a few, Brown considered the analogous statement in the case of simple Lie algebras \cite{Brown}; Borel studied instead maps $G^n \to G$ induced by nontrivial group words in $n$ letters, showing \cite{Borel} that they yield dominant maps.

In this paper, we establish a different property of the commutator map in semisimple compact Lie groups and Lie algebras: its openness at the identity element. We say that a map $f$ between two topological spaces $X$ and $Y$ is open at a point $x\in X$ if for all neighborhoods $U$ of $x$ the image $f(U)$ is a neighborhood of $f(x)$, so that our claim amounts to showing that elements that are sufficiently small (i.e., close to the identity element) arise as commutators of prescribedly small elements. The usual proofs that in a compact semisimple Lie group every element is a commutator provide little information towards this statement as they proceed by expressing each element in the group as the commutator between an element lying in a torus and some expression, which is typically ``far'' from the identity, depending on a nontrivial Coxeter element chosen in the associated Weyl group.

In Section \ref{Liealgebras}, we treat the infinitesimal case of compact semisimple Lie algebras: in this setting, the commutator map is a surjective bilinear map. It was a classical problem answered negatively by Cohen and Horowitz to establish whether the surjectivity of a bilinear map implies its openess in zero \cite{C, H
}.

We show that the commutator map of compact semisimple Lie algebras can be inductively proved to be open by exploiting combinatorial properties of the corresponding root systems. The basis of induction corresponds to Lie algebras of type $A$, and needs to be done by hand.


Our next step is to {\em integrate} to the group level the knowledge we have gathered for Lie algebras. Once more, this is not totally immediate: indeed, the commutator map for a Lie algebra is only a second order approximation of the commutator map for the corresponding Lie group, and we face a scarcity of tools for translating second order information from the infinitesimal level to the local one.

In Section \ref{Liegroups}, we solve this technical issue in two steps. First, we integrate half of the commutator map of the Lie algebra $\G$ to the map
$$\G \times \G \ni (x, y) \mapsto x - \exp(\ad_y) x \in \G,$$
and then use techniques from Rouvi{\`e}re \cite{Rou}, related to the Kashiwara-Vergne method, to get to the group level.

We should stress that our strategy employs more than once the fact that all elements in a compact Lie group (resp. Lie algebra) lie in a torus, and therefore does not immediately extend to noncompact structures.

We would like to thank Alessandro Berarducci for drawing our attention to this problem, which arises from  his work on definable groups. Our openness statement is equivalent to the claim that every element belonging to the infinitesimal
neighbourhood of the identity  (which is a perfect subgroup) in the
non-standard version of a compact semisimple Lie group is a
commutator. This issue was considered by Berarducci, Peterzil and Pillay --- see in particular the comments after \cite[Proposition 2.14]{APP} --- in connection to the question whether a finite central extension of a group definable in an $o$-minimal structure $M$ is interpretable in $M$. A positive answer to the latter question would also imply that a finite central extension of a compact Lie group has an induced Lie structure making the extension a topological cover.

\section{Openness of the commutator map in a compact semisimple Lie algebra}\label{Liealgebras}

Throughout the paper, $G$ will be a semisimple Lie group and $\gog$ its Lie algebra. 
On $\gog$ we consider the Killing form $\kappa_\gog$.

In the following Lemma we characterize pairs of maximal toral subalgebras of
$\gos\gou_n$ which are orthogonal to each other with respect to the Killing form.
If $u_1,\dots,u_n$ is an orthonormal basis of $\mC^n$ then we denote by $\got_u$
the set of elements in $\gos\gou_n$ which are diagonal with respect to
this basis.

\begin{lemma}\label{lem:toriortogonali}
Let $u_1,\dots,u_n$ and $v_1,\dots,v_n$ be two orthonormal bases of
$\mC^n$. Then $\got_u$ is orthogonal to $\got_v$ if and only if 
\begin{equation}\label{ortho}
|u_i\cdot v_h| =  |u_j\cdot v_k|,
\end{equation}
for all $i,j,h,k$.
\end{lemma}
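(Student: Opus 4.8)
The plan is to reduce the assertion to an elementary fact about doubly stochastic matrices. The Killing form $\kappa_\gog$ of $\gos\gou_n$ is a nonzero real multiple of the trace form $(X,Y)\mapsto\Tr(XY)$, so $\got_u$ and $\got_v$ are $\kappa_\gog$-orthogonal exactly when $\Tr(XY)=0$ for all $X\in\got_u$, $Y\in\got_v$. Writing $P_a$ and $Q_b$ for the orthogonal projections of $\mC^n$ onto $\mC u_a$ and $\mC v_b$, a general element of $\got_u$ is $X = i\sum_a\lambda_a P_a$ with $\lambda=(\lambda_a)\in\mR^n$ satisfying $\sum_a\lambda_a=0$, and likewise $Y=i\sum_b\mu_b Q_b$ with $\sum_b\mu_b=0$. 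Using cyclicity of the trace one computes $\Tr(P_aQ_b)=|u_a\cdot v_b|^2$, hence $\Tr(XY)=-\lambda^{T}M\mu$, where $M$ is the $n\times n$ matrix with entries $M_{ab}=|u_a\cdot v_b|^2$.

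The next step is to observe that $M$ is doubly stochastic: expanding $v_b$ in the orthonormal basis $(u_a)_a$ shows $\sum_a M_{ab}=\|v_b\|^2=1$, and symmetrically $\sum_b M_{ab}=1$. Therefore $\got_u$ is orthogonal to $\got_v$ if and only if $\lambda^{T}M\mu=0$ for every pair of vectors $\lambda,\mu$ orthogonal to $\mathbf 1=(1,\dots,1)$.

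The heart of the argument is the linear-algebra claim that this vanishing condition is equivalent to $M=\tfrac1n J$, where $J$ denotes the all-ones matrix. For one direction, testing the condition on the standard basis vectors, with $\lambda=e_a-e_{a'}$ and $\mu=e_b-e_{b'}$, yields $M_{ab}+M_{a'b'}=M_{ab'}+M_{a'b}$ for all indices, which forces $M_{ab}=r_a+c_b$ for suitable real vectors $r,c$; imposing that all row and column sums equal $1$ then makes $r$ and $c$ constant and gives $M_{ab}=1/n$ for all $a,b$. (Equivalently, double stochasticity makes $M$ map $\mathbf 1^{\perp}$ into itself, so the condition says $M$ vanishes on $\mathbf 1^{\perp}$, and combined with $M\mathbf 1=\mathbf 1$ this again gives $M=\tfrac1n J$.) Conversely, if $M=\tfrac1n J$ then $\lambda^{T}M\mu=\tfrac1n(\mathbf 1^{T}\lambda)(\mathbf 1^{T}\mu)=0$ whenever $\lambda,\mu\perp\mathbf 1$, so $\got_u$ and $\got_v$ are orthogonal. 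Finally, $M=\tfrac1n J$ says precisely that $|u_a\cdot v_b|^2=1/n$ for all $a,b$, and this is equivalent to \eqref{ortho}: if all the numbers $|u_i\cdot v_h|$ coincide, then by the row-sum identity $\sum_a|u_a\cdot v_b|^2=1$ their common value must be $1/\sqrt n$.

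I do not anticipate a genuine obstacle. The identity $\Tr(P_aQ_b)=|u_a\cdot v_b|^2$ and the double stochasticity of $M$ are routine, and the only point requiring a little care is keeping track of the zero-sum (i.e.\ $\mathbf 1^{\perp}$) constraints on $\lambda$ and $\mu$ when extracting the shape of $M$: it is exactly double stochasticity of $M$ that guarantees the constrained bilinear form still determines $M$ completely, rather than only modulo matrices of the form $r_a+c_b$.
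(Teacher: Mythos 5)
Your proposal is correct and follows essentially the same route as the paper: your test vectors $\lambda=e_a-e_{a'}$, $\mu=e_b-e_{b'}$ reproduce exactly the paper's generators $U_{ij}$, $V_{hk}$ of the two tori, the resulting four-index identity $M_{ab}+M_{a'b'}=M_{ab'}+M_{a'b}$ is the paper's trace computation, and your use of double stochasticity to pin down $M=\tfrac1n J$ is the same normalization the paper invokes when it sums the identity over $h$ and then over $j$. The doubly stochastic packaging is a pleasant way to organize the final step, but the underlying argument is identical.
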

\begin{proof}
Define $U_{ij} \in \gos\gou_n$ by
$$U_{ij}(u_h) =
\begin{cases} 
\sqrt{-1}u_i &\text{if }h=i;\\ 
-\sqrt{-1}u_j &\text{if }h= j;\\
0 &\text{otherwise}.
\end{cases}$$
and similarly define $V_{ij}$ using the orthonormal basis $v_i$.
Then the operators $U_{ij}$ span $\got_u$ and the operators $V_{ij}$ span $\got_v$.
Easy computations show that
$$
\text{Tr } \big(U_{ij}V_{hk}\big)= |u_i\cdot v_k|^2+|u_j\cdot v_h|^2-|u_i\cdot v_h|^2-|u_j\cdot v_k|^2.
$$
Hence $\got_u$ is orthogonal to $\got_v$ if and only if 
$$
|u_i\cdot v_k|^2+|u_j\cdot v_h|^2 = |u_i\cdot v_h|^2 + |u_j\cdot v_k|^2
$$
for all $i,j,h,k$. Hence if Equation \ref{ortho} is verified, then the two subalgebras are orthogonal. Vice versa,
assume they are orthogonal. Then, summing over $h$, we see that the above equalities imply
$$
|u_i\cdot v_k|^2= |u_j\cdot v_k|^2
$$
for all $i,j$ and summing over $j$ we get 
$$
|u_i\cdot v_h|^2= |u_i\cdot v_k|^2
$$
proving the claim.
\end{proof}

\begin{lemma}
Let $G$ be compact, $\got$ be a maximal toral subalgebra of $\gog$. Then there exists a maximal toral subalgebra of $\gog$ orthogonal to $\got$.
\end{lemma}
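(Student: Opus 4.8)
The plan is to reduce to the case in which $\mathfrak g$ is simple, then to place inside $\mathfrak g$ a subalgebra of the same rank that splits as a direct sum of Lie algebras of type $A$, and finally to apply Lemma~\ref{lem:toriortogonali} on each summand. The reduction to the simple case is routine: since $G$ is compact all maximal toral subalgebras of $\mathfrak g$ are conjugate, and two distinct simple ideals $\mathfrak g_i,\mathfrak g_j$ of $\mathfrak g$ are automatically orthogonal for $\kappa_{\mathfrak g}$, because $\kappa_{\mathfrak g}([z,\mathfrak g_i],\mathfrak g_j)=-\kappa_{\mathfrak g}(\mathfrak g_i,[z,\mathfrak g_j])=0$ for $z\in\mathfrak g_i$ while $[\mathfrak g_i,\mathfrak g_i]=\mathfrak g_i$. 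Writing $\mathfrak g=\bigoplus_i\mathfrak g_i$ and $\mathfrak t=\bigoplus_i\mathfrak t_i$ accordingly, and using $\kappa_{\mathfrak g}|_{\mathfrak g_i}=\kappa_{\mathfrak g_i}$, it is enough to solve the problem inside each $\mathfrak g_i$ and take the direct sum.

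Assume now $\mathfrak g$ simple. I would exhibit a subalgebra $\mathfrak g'\subseteq\mathfrak g$ of the form $\mathfrak{su}_{n_1}\oplus\dots\oplus\mathfrak{su}_{n_r}$ with $\sum_k(n_k-1)=\operatorname{rk}\mathfrak g$. Such a maximal-rank subalgebra can be read off the extended Dynkin diagram (Borel--de Siebenthal): for type $A_\ell$ take $\mathfrak g'=\mathfrak g$; $\mathfrak{sp}_\ell\supseteq\mathfrak{sp}_1^{\,\ell}=\mathfrak{su}_2^{\,\ell}$; $\mathfrak{so}_m$ contains a maximal-rank direct sum of copies of $\mathfrak{so}_4=\mathfrak{su}_2\oplus\mathfrak{su}_2$ and $\mathfrak{so}_6=\mathfrak{su}_4$ (passing to $\mathfrak{so}_{m-1}$ first when $m$ is odd), which covers types $B$ and $D$; and for the exceptional types one checks $\mathfrak{su}_3\subseteq G_2$, $\mathfrak{su}_3\oplus\mathfrak{su}_3\subseteq F_4$, $\mathfrak{su}_2\oplus\mathfrak{su}_6\subseteq E_6$, $\mathfrak{su}_8\subseteq E_7$, $\mathfrak{su}_9\subseteq E_8$. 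After conjugating $\mathfrak t$ we may assume $\mathfrak t\subseteq\mathfrak g'$, so $\mathfrak t=\mathfrak t_1\oplus\dots\oplus\mathfrak t_r$ with $\mathfrak t_k$ a maximal toral subalgebra of $\mathfrak{su}_{n_k}$; and since $\operatorname{rk}\mathfrak g'=\operatorname{rk}\mathfrak g$, every maximal toral subalgebra of $\mathfrak g'$ is also one of $\mathfrak g$.

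To assemble the orthogonal torus, apply Lemma~\ref{lem:toriortogonali} to each $\mathfrak{su}_{n_k}$: if $\mathfrak t_k=\mathfrak t_u$ for an orthonormal basis $u$ of $\mathbb{C}^{n_k}$, the discrete Fourier basis $v_h=\frac1{\sqrt{n_k}}\sum_j e^{2\pi i jh/n_k}u_j$ is orthonormal and satisfies $|u_i\cdot v_h|=1/\sqrt{n_k}$ for all $i,h$, so $\mathfrak t_k':=\mathfrak t_v$ is orthogonal to $\mathfrak t_k$ inside $\mathfrak{su}_{n_k}$. Put $\mathfrak t'=\mathfrak t_1'\oplus\dots\oplus\mathfrak t_r'$, a maximal toral subalgebra of $\mathfrak g'$, hence of $\mathfrak g$. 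That $\mathfrak t'\perp\mathfrak t$ for $\kappa_{\mathfrak g}$ then follows from two observations: summands with different indices are $\kappa_{\mathfrak g}$-orthogonal by the argument of the reduction step applied to the simple ideals of $\mathfrak g'$; and on each $\mathfrak{su}_{n_k}$ the restriction of $\kappa_{\mathfrak g}$ is a positive multiple of the form appearing in Lemma~\ref{lem:toriortogonali} (a compact simple Lie algebra has a one-dimensional space of invariant bilinear forms, and both forms are negative definite), so that rescaling does not affect orthogonality.

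The crux — and the only step requiring genuine work — is the claim that every compact simple Lie algebra contains a maximal-rank subalgebra which is a direct sum of type-$A$ factors. This is classification-dependent and has to be checked case by case through the extended Dynkin diagrams as indicated (equivalently, via Borel--de Siebenthal), with the classical series $B$, $C$, $D$ handled by the explicit matrix inclusions above and the five exceptional algebras by inspection.
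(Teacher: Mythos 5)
Your argument is correct, but it reaches the conclusion by a genuinely different route than the paper. Both proofs share the same base case — the discrete Fourier basis producing an orthogonal pair of maximal toral subalgebras in $\gos\gou_n$ — but they diverge afterwards. The paper runs an induction on the rank: for a simple $\gog$ not of type $A$ it picks the simple root $\gra$ with $\theta=\omega_\gra$ or $2\omega_\gra$, applies the inductive hypothesis to the semisimple part $\goh$ of the corresponding maximal Levi subalgebra, and adjoins $\mR u_\theta$ (which centralizes $\goh$ and is orthogonal to $\got$) to the orthogonal torus of $\goh$; the only classification input is the single fact about $\theta$ read off Bourbaki's tables. You instead collapse the induction into one step by invoking Borel--de Siebenthal to produce a full-rank regular subalgebra $\gog'\iso\gos\gou_{n_1}\oplus\dots\oplus\gos\gou_{n_r}$ and then apply the type-$A$ construction factor by factor; your supporting points (conjugating $\got$ into $\gog'$, orthogonality of distinct commuting ideals via invariance of $\kappa_\gog$, and the fact that $\kappa_\gog$ restricts on each simple factor to a positive multiple of the form in Lemma~\ref{lem:toriortogonali}) are all sound. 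What your route buys is a cleaner, non-inductive assembly of the orthogonal torus; what it costs is a heavier, case-by-case classification input — you must actually exhibit the full-rank type-$A$ regular subalgebra in each simple type, including the five exceptional ones, whereas the paper's induction gets by with a milder uniform statement. Since all the inclusions you list ($\gos\gou_2^{\,\ell}\subset\mathfrak{sp}_\ell$, the $\mathfrak{so}_4$/$\mathfrak{so}_6$ decompositions for types $B$ and $D$, and the standard maximal-rank subalgebras $A_2\subset G_2$, $A_2\times A_2\subset F_4$, $A_1\times A_5\subset E_6$, $A_7\subset E_7$, $A_8\subset E_8$) are indeed correct and standard, this is a complete alternative proof, though for a self-contained write-up you would want to justify those exceptional-type inclusions (e.g., by the extended Dynkin diagram algorithm) rather than assert them.
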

\begin{proof}
We first analyse the case $\gog=\gos\gou_n$. Set $\zeta = e^{\frac{2\pi i}{n}}$, and
let $u_1,\dots,u_n$ be an orthonormal basis of $\mC^n$ such that
$\got=\got_u$. For $j=1,\dots,n$ define
$$
v_j = \frac{1}{\sqrt{n}} (u_1+\zeta^j u_2+ \zeta^{2j}u_2+\dots+\zeta^{(n-1)j}u_n).
$$
Then $v_1,\dots,v_n$ is an orthonormal basis of $\mC^n$ and $|u_i\cdot v_j|=1/\sqrt{n}$ for all $i,j$. Hence $\got_u$ is orthogonal to $\got_v$.
For $\gog$ not isomorphic to $\gos\gou_n$ we proceed by induction on
the rank of $\gog$. If $\gog$ is not simple the claim follows
immediately by induction, so we assume that $\gog$ is simple.

Let $\gog_\mC$ (resp. $\got_\mC$) be the complexification of $\gog$
(resp. $\got_\mC$), denote by $\Phi$ the associated root system and
choose a simple basis $\Delta\subset \Phi$.  Let $\omega_\alpha$, for $\alpha \in \Delta,$ be the corresponding fundamental weights, and $\theta$ be the highest root of $\Phi$.
Since $\gog$ is not of type $A$ there exists a simple root $\alpha$ such that $\theta =\omega_\alpha $ or $\theta=2\omega_\alpha$, see \cite[Planches II-IX]{Bou}. Let $\Psi$ be the root
system generated by $\Delta\setminus \{\gra\}$.

We can choose a standard Chevalley basis, $h_\alpha$ with $\gra \in \Delta$ and $x_\gra$ with $\gra \in \Phi$ such that elements 
$$
k_\gra = \sqrt{-1} h_\gra \qquad u_\gra= x_\gra-x_{-\gra} \;\text{ and }\;v_\gra = \sqrt{-1}(x_\gra+x_{-\gra})
$$
are a basis of $\gog$, see \cite[Theorem 6.11, Formula (6.12)]{Kna}. Notice that the subspace orthogonal of $\got$ is the linear span of the elements $u_\gra$ and $v_\gra$.
Define $$ \goh =\langle k_\grb, u_\grb, v_\grb : \grb \in \Psi \rangle.$$ 
This is the semisimple part of the maximal Levi subalgebra associated to $\gra$.  In particular the claim is true for $\goh$. Let $\gos$ be a maximal toral subalgebra orthogonal to the maximal toral subalgebra of $\goh$ given by $\got\cap \goh$.

Notice also that we have $[u_\theta,\goh]=[u_{-\theta},\goh]=0$. Hence, for dimension reasons,
$$
\gos\oplus \mR u_\theta
$$
is a maximal toral subalgebra of $\gog$ orthogonal to $\got$.
\end{proof}

We can now prove the following fact.
\begin{thm}
Let $G$ be compact. Then the commutator map
$$\comm_\G: \G \times \G \ni (x, y) \mapsto [x, y] \in \G$$
is open at $(0, 0)$.
\end{thm}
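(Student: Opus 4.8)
The plan is to deduce the statement from the lemma just proved (which in turn rests on Lemma~\ref{lem:toriortogonali}), after a reduction to target vectors lying in a fixed maximal toral subalgebra. Fix a maximal toral subalgebra $\got$ of $\gog$. Since $G$ is compact, every element of $\gog$ lies in some maximal toral subalgebra, all of these are $\Ad(G)$-conjugate, $\Ad(G)$ acts by isometries for the negative definite form $-\kappa_\gog$, and conjugation commutes with the bracket. Hence it suffices to show that every sufficiently small $z\in\got$ can be written as $z=[x,y]$ with $x,y$ prescribedly small: given an arbitrary small $z'\in\gog$, choose $g\in G$ with $\Ad(g)z'\in\got$, write $\Ad(g)z'=[x,y]$, and then $z'=[\Ad(g)^{-1}x,\Ad(g)^{-1}y]$, a commutator of elements of the same $(-\kappa_\gog)$-norms. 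So from now on I would assume $z\in\got$, with $|z|$ denoting its $(-\kappa_\gog)$-norm.

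Next, apply the lemma just proved to obtain a maximal toral subalgebra $\gos$ orthogonal to $\got$, and fix a regular element $y_0\in\gos$. Since $\ad_{y_0}$ is skew-symmetric with respect to the positive definite form $-\kappa_\gog$, it is invertible on the orthogonal complement of its kernel; and as $y_0$ is regular, its kernel is exactly $\gos$, so $\ad_{y_0}$ restricts to an invertible operator of $\gos^\perp$. Because $\got\perp\gos$ we have $\got\subseteq\gos^\perp$, so for $z\in\got$ we may set $x_0:=-\bigl(\ad_{y_0}|_{\gos^\perp}\bigr)^{-1}(z)\in\gos^\perp$, which satisfies $[x_0,y_0]=-\ad_{y_0}(x_0)=z$ and $|x_0|\le m^{-1}|z|$, where $m>0$ is any constant with $|\ad_{y_0}(w)|\ge m|w|$ for all $w\in\gos^\perp$ — a number depending only on the fixed datum $y_0$.

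Finally I would use bilinearity of the bracket to rebalance the sizes: for every $t>0$ one still has $[\,t^{-1}x_0,\,t\,y_0\,]=z$, and the product $|t^{-1}x_0|\cdot|t\,y_0|=|x_0|\cdot|y_0|\le m^{-1}|y_0|\,|z|$ does not depend on $t$. Taking $t=|z|^{1/2}$ (and $x=y=0$ when $z=0$) yields $|y|=|y_0|\,|z|^{1/2}$ and $|x|\le m^{-1}|z|^{1/2}$, both tending to $0$ as $|z|\to0$; thus, given a neighbourhood $U$ of $(0,0)$ containing a product of $\delta$-balls, there is $\delta'>0$ such that every $z$ with $|z|<\delta'$ is $\comm_\gog(x,y)$ for some $(x,y)\in U$, which is precisely openness at $(0,0)$. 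The genuine content of the argument is concentrated in Lemma~\ref{lem:toriortogonali} and the lemma after it — this is where compactness and the root-system combinatorics are used, and where the type $A$ base case is done by hand — whereas within the proof of the theorem itself the only rescaling trick is the choice $t=|z|^{1/2}$, the very feature that a surjective bilinear map need not possess in general (compare the Cohen–Horowitz examples). Accordingly, I expect no serious obstacle in the theorem proper; the point needing the most care is the bookkeeping in the conjugation reduction and the observation that the constant $m$ can be fixed once and for all once $\got$, $\gos$ and $y_0$ are chosen.
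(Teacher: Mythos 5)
Your proof is correct, and it rests on the same two pillars as the paper's argument: the lemma producing a maximal toral subalgebra $\gos$ orthogonal to $\got$, and the invertibility of $\ad$ of a regular element on the orthogonal complement of its centralizer, combined with $\Ad(G)$-invariance of the Killing form and of the bracket. The one genuine difference is how smallness of the two commutator factors is achieved. The paper chooses its regular element $x$ inside the given neighbourhood $U$ from the outset (regular elements are dense, so this costs nothing), so that $\ad_x(\gom\cap U)$ already covers a neighbourhood of $0$ in $\gom=\got^\perp$; it then conjugates this image \emph{outward} to fill a $G$-stable neighbourhood $V$ of $0$, the orthogonality lemma guaranteeing that $\gom$ contains a full maximal toral subalgebra and hence meets every conjugacy class. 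You instead fix a regular $y_0\in\gos$ of arbitrary size, conjugate the target $z$ \emph{inward} into $\got\subseteq\gos^\perp=\mathrm{im}(\ad_{y_0})$, and recover smallness of both factors afterwards by the homogeneity rescaling $(x_0,y_0)\mapsto(t^{-1}x_0,t\,y_0)$ with $t=|z|^{1/2}$. The two devices are interchangeable; your version has the mild advantage of making the quantitative dependence $|x|,|y|\lesssim|z|^{1/2}$ explicit — precisely the uniform control whose failure underlies the Cohen--Horowitz counterexamples — at the cost of an extra conjugation bookkeeping step that the paper sidesteps by working with $G$-stable neighbourhoods throughout.
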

\begin{proof}
We need to prove that if $U$ is neighbourhood of $0$ then $\comm_\G(U\times U)$ contains a neighbourhood of $0$. Notice first that being $G$ compact we can assume that $U$ is $G$-stable under the adjoint action.

Choose now a regular element $x \in U$ (an element is said to be regular if its centralizer is a toral subalgebra) and let $\got$ be its centralizer. Let $\gom$ be the orthogonal of $\got$. Then 
$$
\ad_x:\gom \lra \gom
$$
is a linear isomorphism. Hence there exists a $G$-stable neighbourhood $V$ of $G$ such that 
$\ad_x(\gom\cap U)\supset \gom\cap V$.

Consider now $\psi:G \times (\gom\cap U)$ given by
$\psi(g,y)=\Ad_g [x,y]$. It is clear that the image of $\psi$ is
contained in $\comm_\G( U\times U)$ and that the image of $\psi$
contains $G\cdot(V\cap \gom)$.
Finally from the previous Lemma we have that $G\cdot (V\cap\gom) = V$. Hence 
$\comm_\G( U\times U) \supset V$ proving the Theorem.
\end{proof}

\section{Openness of the commutator map in a Lie group}\label{Liegroups}

We will now show that the commutator map
$$\Comm_G: G \times G \ni (X, Y) \mapsto XYX^{-1}Y^{-1} \in G$$
is open at $(\id, \id)$ as soon as the corresponding infinitesimal commutator map $\comm_\G: \G \times \G \to \G$ is open at $(0,0)$.

\begin{lemma}
The map $\comm_\G$ is open at $(0,0)$ if and only if the map
$$C_\G: \G \times \G \ni (x, y) \mapsto x - \exp(\ad_y) x \in \G$$
is so.
\end{lemma}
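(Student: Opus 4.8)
The plan is to relate the two maps via their second-order behavior at the origin and to exploit the fact that $C_\G$ is ``half'' of the Lie algebra commutator. First I would expand $C_\G(x,y) = x - \exp(\ad_y)x = -[y,x] - \tfrac12[y,[y,x]] - \dots = [x,y] + (\text{terms of order} \ge 3)$, so that $C_\G$ agrees with $\comm_\G$ up to second order. One direction is then almost immediate: if $C_\G$ is open at $(0,0)$, then since $\comm_\G(x,y) = C_\G(x,y) + R(x,y)$ with $R$ vanishing to order $3$, a standard perturbation argument (rescaling $x \mapsto tx$, $y \mapsto ty$, so that both $C_\G$ and $\comm_\G$ scale like $t^2$ while $R$ scales like $t^3$ or higher) shows $\comm_\G$ is open at $(0,0)$ as well; the openness of $C_\G$ provides, on each small sphere, an image containing a fixed-size ball (after rescaling), and the higher-order remainder cannot destroy this for $t$ small. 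I would make this precise using the homogeneity of $C_\G$: writing $C_\G(tx,ty) = t\,[ \text{something}](x,ty)$ is not homogeneous, but $\comm_\G(tx,ty) = t^2\comm_\G(x,y)$ exactly, so it is cleanest to compare $\comm_\G$ with the degree-$2$ part of $C_\G$, which is exactly $\comm_\G$.

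For the converse, I would use the observation that $C_\G(x,y)$ can be rewritten so that the ``$y$-dependence'' passes through a group-like object. Specifically, $\exp(\ad_y) = \Ad_{\exp y}$, so $C_\G(x,y) = x - \Ad_{\exp y}x = (\id - \Ad_g)x$ where $g = \exp y$. Thus, assuming $\comm_\G$ open, I want to show that the map $(x,g)\mapsto (\id - \Ad_g)x$ has image containing a neighborhood of $0$ when $x$ and $g$ range over small neighborhoods. The key point is that for $y$ small, $\id - \Ad_{\exp y} = -\ad_y + O(\ad_y^2) = -\ad_y(\id + O(\ad_y))$, and $\id + O(\ad_y)$ is invertible for $y$ small; hence $(\id - \Ad_{\exp y})x = -\ad_y\bigl((\id + O(\ad_y))x\bigr)$. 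Since $x \mapsto (\id + O(\ad_y))x$ is a linear automorphism of $\G$ depending smoothly on $y$ (and equal to $\id$ at $y = 0$), the image of $x \mapsto (\id-\Ad_{\exp y})x$ over a neighborhood $U$ of $0$ equals $-\ad_y(\widetilde U_y)$ for a neighborhood $\widetilde U_y$ that can be taken to contain a fixed small ball uniformly in $y$. Therefore the image of $C_\G$ on $U \times U$ contains $\{[x',y] : x' \in B, \ y \in U\}$ for a fixed small ball $B$, up to sign, which is precisely (half of) the image of $\comm_\G$ on $B \times U$; openness of $\comm_\G$ then gives a neighborhood of $0$.

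The main obstacle I anticipate is controlling the perturbation uniformly: in the converse direction one must ensure that the ``correction'' automorphisms $\id + O(\ad_y)$ do not shrink the relevant ball $\widetilde U_y$ to zero as $y$ ranges over a neighborhood — this requires a uniform (compactness-type) bound on the operator norms of these automorphisms and their inverses for $y$ in a fixed small ball, which is routine but must be stated carefully. Symmetrically, in the forward direction, the higher-order remainder $R$ must be dominated on each rescaled sphere, which again reduces to a uniform estimate plus the homogeneity of $\comm_\G$. Once these uniform estimates are in place, both implications follow by the scaling argument; I would organize the proof so that the same lemma on uniform invertibility of $\id - \exp(\ad_y)$ composed with $\ad_y^{-1}$ (on the orthogonal complement of a regular element's centralizer, as in the previous section) does the work in both directions.
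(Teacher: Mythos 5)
Your second direction ($\comm_\G$ open $\Rightarrow$ $C_\G$ open) is essentially right, and is in substance the paper's whole proof: writing $C_\G(x,y)=[\Lambda_y x,y]$ with $\Lambda_y=\frac{\exp(\ad_y)-1}{\ad_y}$ invertible for small $y$ is exactly the factorization $C_\G=\comm_\G\circ\,\phi$ with $\phi(x,y)=(\Lambda_y x,y)$ used in the paper. Since $\phi$ is a local diffeomorphism fixing $(0,0)$, it carries neighbourhoods of the origin onto neighbourhoods of the origin, so the uniformity issues you worry about at the end simply do not arise.

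The first direction as you propose it has a genuine gap. From $\comm_\G=C_\G+R$ with $R$ vanishing to order $3$, together with a quantitative openness statement $C_\G(B_t\times B_t)\supset B_{ct^2}$, what you actually obtain is that every point of $B_{ct^2}$ lies within $O(t^3)$ of a point of $\comm_\G(B_t\times B_t)$; that is, the image is $O(t^3)$-dense in $B_{ct^2}$. Density does not yield containment of a ball: openness of a map at a point is not stable under $C^0$-small (even higher-order) perturbations without further structure --- this is precisely the kind of pathology behind the Cohen--Horowitz examples cited in the introduction --- so ``the higher-order remainder cannot destroy this'' is an assertion, not an argument; one would need something like a degree-theoretic or fixed-point argument to close it. The concluding remark that one should compare $\comm_\G$ with the degree-$2$ part of $C_\G$, ``which is exactly $\comm_\G$'', is circular and does not repair this. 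Fortunately the repair is already contained in your own second direction: the same identity read backwards, $\comm_\G(x,y)=C_\G(\Lambda_y^{-1}x,y)$, i.e.\ precomposition with the local diffeomorphism $\phi^{-1}$, transfers openness in this direction too with no perturbation estimate at all. I would therefore restructure the proof around the single factorization $C_\G=\comm_\G\circ\,\phi$ and discard the rescaling argument.
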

\begin{proof}
The map
$$\phi: \G \times \G \ni (x, y) \mapsto \left( \frac{\exp(\ad_y) - 1}{\ad_y}(x), y \right) \in \G \times \G$$
is smooth, and has invertible differential in $(0,0)$, so it is a local diffeomorphism. However, the composition of $\comm_\G \circ \, \phi$ equals $C_\G$.
\end{proof}
In order to deal with openness of the commutator map in a group, we are going to use the following variant \cite[Remarque 4.2]{Rou}, related to the Kashiwara-Vergne method, of the Baker-Campbell-Hausdorff formula.
\begin{prop}\label{KW}
There exist, in a neighbourhood of $(0, 0) \in \G \times \G$, analytical functions
$$P, Q: \G \times \G \to G, \qquad P(0, 0) = Q(0, 0) = \id,$$
satisfying
$$\exp(a + b) = \exp(P(a, b).a) \exp(Q(a, b).b),$$
for all $a, b$.
\end{prop}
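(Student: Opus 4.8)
The plan is to deduce the statement — it is essentially \cite[Remarque 4.2]{Rou} — from the first Kashiwara--Vergne equation. Write $P=\exp\xi$ and $Q=\exp\eta$ for analytic maps $\xi,\eta\colon\G\times\G\to\G$, defined near $(0,0)$ and vanishing there, to be determined. Since $\Ad_{\exp\xi}=e^{\ad\xi}$ and $\exp(u)\exp(v)=\exp Z(u,v)$, where $Z(u,v)=\log(e^{u}e^{v})$ is the Baker--Campbell--Hausdorff series, and since $\exp$ is a local diffeomorphism at $0$, the desired identity is equivalent to
$$
Z\big(e^{\ad\xi(a,b)}a,\ e^{\ad\eta(a,b)}b\big)=a+b .
$$
The obstruction to a direct appeal to the analytic implicit function theorem is that $\xi$ and $\eta$ enter this equation only through iterated brackets $[\xi,a],[\eta,b],[\xi,[\xi,a]],\dots$, which all vanish at $a=b=0$; hence its differential in $(\xi,\eta)$ degenerates at the origin. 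This is exactly the Kashiwara--Vergne situation, and I would handle it by a graded construction followed by a convergence estimate.

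For the graded step, work in the free Lie algebra $\mathfrak f$ on two generators $a,b$ and look for $\xi=\sum_{n\ge1}\xi_n$, $\eta=\sum_{n\ge1}\eta_n$ with $\xi_n,\eta_n\in\mathfrak f$ homogeneous of degree $n$, built by induction on $n$. A count of degrees shows that the only terms of degree $n+1$ in $Z(e^{\ad\xi}a,e^{\ad\eta}b)-(a+b)$ that involve $\xi_n$ or $\eta_n$ are $[\xi_n,a]$ and $[\eta_n,b]$, so the degree-$(n+1)$ part of the equation takes the form
$$
[\xi_n,a]+[\eta_n,b]=-E_{n+1},
$$
where $E_{n+1}\in\mathfrak f$ is homogeneous of degree $n+1$ and depends only on $\xi_1,\dots,\xi_{n-1},\eta_1,\dots,\eta_{n-1}$. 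Since the homogeneous components of $\mathfrak f$ in degrees $\ge2$ are spanned by left-normed brackets in $a,b$, each of which is of the form $[w,a]$ or $[w,b]$ with $w\in\mathfrak f$, the right-hand side lies in $[\mathfrak f,a]+[\mathfrak f,b]$, so this equation always admits a (universal) solution $(\xi_n,\eta_n)$; for $n=1$ one may take $\xi_1=0$ and $\eta_1=-\tfrac12 a$. This yields formal power series $\xi,\eta$ solving the equation to all orders.

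The remaining and decisive point is that these formal series converge on a neighbourhood of $(0,0)$ in $\G\times\G$; I would extract the necessary majorant bounds from Rouvi\`ere's estimates in \cite{Rou} — any proof of convergence of the first Kashiwara--Vergne equation will do. Granting convergence, $P=\exp\xi$ and $Q=\exp\eta$ are analytic, satisfy $P(0,0)=Q(0,0)=\id$, and verify $\exp(a+b)=\exp(P(a,b).a)\exp(Q(a,b).b)$, which concludes the argument. The convergence estimate is the true obstacle here: the formal solution is almost automatic but provides no a priori control on the growth of $\xi_n,\eta_n$.
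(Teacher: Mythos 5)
The paper offers no proof of this proposition at all --- it simply cites \cite[Remarque 4.2]{Rou} --- so your reconstruction is necessarily a different and more informative route, and its formal half is correct. Writing $P=\exp\xi$, $Q=\exp\eta$ and reducing to $Z(e^{\ad\xi}a,e^{\ad\eta}b)=a+b$ is the right reformulation; the degree count showing that $\xi_n,\eta_n$ enter degree $n+1$ only through $[\xi_n,a]+[\eta_n,b]$ is accurate (any occurrence of $\xi_n$ inside a higher iterated bracket, or inside a genuine BCH bracket term, has degree at least $n+2$); and the solvability of the recursion follows, as you say, because every homogeneous element of the free Lie algebra of degree at least $2$ is a combination of left-normed brackets and hence lies in $[\mathfrak f,a]+[\mathfrak f,b]$. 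The one caveat --- which, to your credit, you flag yourself --- is that the recursion determines $(\xi_n,\eta_n)$ only up to a large kernel, and a generic formal solution of the Kashiwara--Vergne recursion is \emph{not} convergent; so convergence is not an estimate one can ``extract from \cite{Rou}'' and graft onto whatever solution the free recursion happens to produce. One must either follow Rouvi\`ere's specific construction, which builds the solution from explicit convergent (integral) formulas rather than from the graded recursion, or cite the analytic existence statement as a black box. Since your argument is explicitly conditional on exactly that citation, it is no less complete than the paper's treatment, and it supplies the correct formal mechanism that the paper omits.
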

\begin{thm}
The group commutator map $\Comm_G$ is open at $(\id, \id)$ as soon as the infinitesimal commutator map $\comm_\G$ is open at $(0, 0)$.
\end{thm}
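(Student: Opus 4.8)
The plan is to reduce the openness of $\Comm_G$ at $(\id,\id)$ to the openness of the map $C_\G$ at $(0,0)$, which by the preceding Lemma is equivalent to the openness of $\comm_\G$ that we are assuming. The bridge between the group and the Lie algebra will be the factorization of Proposition \ref{KW}. First I would set up coordinates: work in a neighbourhood of $(\id,\id)$ where $\exp$ is a diffeomorphism onto a neighbourhood of $\id$, and write $X = \exp a$, $Y = \exp b$ for $a,b\in\G$ small. The goal is to show that for every neighbourhood $U$ of $(0,0)$ in $\G\times\G$, the set $\{\,\exp a\,\exp b\,\exp(-a)\,\exp(-b) : (a,b)\in U\,\}$ contains a neighbourhood of $\id$ in $G$.

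Next I would rewrite the group commutator using Proposition \ref{KW}. Applying the factorization to $\exp(a)\exp(b)$ and to $\exp(-a)\exp(-b)$, one obtains $\exp a\,\exp b = \exp(P_1.a)\exp(Q_1.b)\cdots$ — but the cleanest route is to note that $\exp a\,\exp b\,\exp(-a)\,\exp(-b)$ can be written, after conjugating, as $\exp(\,\alpha(a,b).(a - \exp(\ad_b)a)\,)$ for some analytic $G$-valued function $\alpha$ with $\alpha(0,0)=\id$; indeed the first-order term of the group commutator in these coordinates is $a-\exp(\ad_b)a = C_\G(a,b)$ up to the correction handled by $P,Q$. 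Concretely: $\exp(-b)\exp a\exp b = \exp(\exp(-\ad_b)a)$, so $\exp a \cdot \exp(-b)\exp a\exp b\,{}^{-1}$... — I would instead use $P,Q$ twice to collapse the four-fold product $\exp(a)\exp(b)\exp(-a)\exp(-b)$ into a single $\exp$ of an expression of the form $R(a,b).\big(a-\exp(\ad_b)a\big)$ with $R:\G\times\G\to G$ analytic and $R(0,0)=\id$; the point is that $a-\exp(\ad_b)a$ is exactly $C_\G(a,b)$ and the $G$-factor $R(a,b)$ only conjugates it.

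Granting this, the conclusion follows as in the proof of the previous Theorem. Fix a neighbourhood $U$ of $(0,0)$; shrinking it we may assume $U$ lies in the domain of $R$ and that $\exp$ is a diffeomorphism on the relevant sets. By openness of $C_\G$ at $(0,0)$, the image $C_\G(U)$ contains a neighbourhood $W$ of $0$ in $\G$. Since $R$ is continuous with $R(0,0)=\id$, after further shrinking $U$ the map $(a,b)\mapsto R(a,b).C_\G(a,b)$ still has image containing a (possibly smaller) neighbourhood $W'$ of $0$: this is where one uses that $\mathrm{Ad}(R(a,b))$ is close to the identity, so composing the open map $C_\G$ with an invertible linear perturbation keeps the image a neighbourhood of $0$. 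Then $\Comm_G(\exp a,\exp b) = \exp\big(R(a,b).C_\G(a,b)\big)$ ranges over $\exp(W')$, a neighbourhood of $\id$, proving openness at $(\id,\id)$.

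I expect the main obstacle to be the bookkeeping in the second step: producing the clean formula $\Comm_G(\exp a,\exp b)=\exp\big(R(a,b).C_\G(a,b)\big)$ with the first-order part being \emph{exactly} $C_\G(a,b)=a-\exp(\ad_b)a$ and the rest absorbed into an analytic $G$-valued factor $R$. One must apply Proposition \ref{KW} carefully to each pairwise product, track how the $P,Q$ factors act by the adjoint representation, and verify that the leftover terms combine into $a-\exp(\ad_b)a$ acted on by a single group element rather than contributing an additional additive error; the compatibility $\exp(-b)\exp(a)\exp(b)=\exp(\exp(-\ad_b)a)$ is the key identity that makes the $\exp(\ad_b)$ appear. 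Once that normal form is in hand, the openness argument is a soft continuity-and-perturbation argument and presents no difficulty.
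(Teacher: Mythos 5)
Your reduction to $C_\G$ and your use of the identity $\exp(-b)\exp(a)\exp(b)=\exp(\exp(-\ad_b)a)$ are both on target, but the step you defer as ``bookkeeping'' --- the normal form $\Comm_G(\exp a,\exp b)=\exp\bigl(R(a,b).C_\G(a,b)\bigr)$ with a single $G$-valued conjugator $R$ --- is the actual crux, and it does not follow from Proposition \ref{KW}. That proposition runs in the opposite direction: it expands a single exponential $\exp(a+b)$ into a product, and does so with \emph{two different} conjugators $P$ and $Q$. If you try to invert it so as to collapse $\exp(u)\exp(v)$ (with $u=a$, $v=-\exp(\ad_b)a$) into one exponential, you get $\exp(u)\exp(v)=\exp(c+d)$ where $c=P(c,d)^{-1}.u$ and $d=Q(c,d)^{-1}.v$, so that $c+d=\Ad_{P^{-1}}u+\Ad_{Q^{-1}}v$, which is not of the form $\Ad_R(u+v)$ for a single $R$. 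The exact statement you want --- that $\log\Comm_G(\exp a,\exp b)$ is $\Ad$-conjugate to $a-\exp(\ad_b)a$ --- is in fact doubtful: in the compact case conjugacy forces equality of Killing norms, and the Baker--Campbell--Hausdorff corrections to $u+v$ do not preserve the norm in general. There is also a second, smaller issue: even granting the normal form, openness at a point is not automatically preserved when you postcompose with a conjugation $\Ad_{R(a,b)}$ that \emph{varies} with $(a,b)$; hitting a prescribed target $w$ becomes the fixed-point problem $C_\G(a,b)=\Ad_{R(a,b)}^{-1}w$, which openness of $C_\G$ at $(0,0)$ alone does not solve.

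The paper avoids both problems by reading Proposition \ref{KW} in the direction it is stated. Applying it with $a=x$, $b=-\exp(\ad_y)x$ gives
$$\exp\bigl(C_\G(x,y)\bigr)=\exp(P.x)\exp\bigl(-Q.\exp(\ad_y)x\bigr)=P\exp(x)P^{-1}\cdot Q\exp(y)\exp(-x)\exp(-y)Q^{-1}=ABA^{-1}B^{-1},$$
with $A=P\exp(x)P^{-1}$ and $B=Q\exp(y)P^{-1}$. Here one does not need $B$ to be a conjugate of $\exp(y)$: it suffices that $A$ lies in $\exp U$ (true when $U$ is $\Ad$-stable, since $A$ is conjugate to $\exp x$) and that $B=\psi(x,y)$ lies in $\exp U$ by continuity for $(x,y)$ small. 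Then the set of commutators of elements of $\exp U$ \emph{contains} the image of $\exp\circ\, C_\G$ on a small neighbourhood, and that image contains a neighbourhood of $\id$ by hypothesis; no exact normal form for $\Comm_G(\exp a,\exp b)$ is ever needed. To salvage your route, replace the claimed identity by this one-sided containment.
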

\begin{proof}
Let us apply Proposition \ref{KW} to $a = x, b = - \exp(\ad_y) x$. Using the notation introduced above, we set $P = P(a, b), Q = Q(a, b)$ and obtain
\begin{eqnarray*}
\exp(x - \exp(\ad_y) x) & = & \exp(P.x) \exp(-Q.\exp(\ad_y) x)\\
& = & P \exp(x) P^{-1} (Q\exp(y)) \exp(-x) (Q \exp(y))^{-1}\\
& = & ABA^{-1}B^{-1},
\end{eqnarray*}
where $A = P \exp(x) P^{-1}, B = Q \exp(y) P^{-1}$.

Let $U$ be a ($G$-stable) neighbourhood of $0\in \G$ on which $\exp$ restricts to a diffeomorphism. The map $\psi: \G \times \G \to G$ defined by
$$(x, y) \mapsto Q(x, \exp(\ad_y)x) \exp(y) P(x, \exp(\ad_y)x)^{-1}$$
is analytical, hence continuous. We may then find a neighbourhood $U'$ of $0 \in \G$, which we assume to be $G$-stable and contained in $U$, such that $\psi(U' \times U') \subset \exp U$. If $x, y$ lie in $U'$, then $A$ and $B = \psi(x, y)$ lie in $\exp U$; moreover, the composition $\exp \circ \,C_\G$ maps $(x, y)$ in $ABA^{-1}B^{-1}$ and is open at $(0, 0)$.

We thus conclude that all elements in a suitable neighbourhood of $\id \in G$ arise as commutators of elements from $\exp U$.
\end{proof}
\begin{cor}
If $G$ is a compact semisimple Lie group, then $\Comm_G$ is open at $(\id, \id)$.
\end{cor}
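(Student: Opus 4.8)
The plan is to deduce the statement by simply concatenating the two main results already established; nothing new is needed. The corollary merely records that the hypothesis of the last theorem of Section \ref{Liegroups} is satisfied for every compact semisimple $G$, which is precisely the content of the theorem proved in Section \ref{Liealgebras}.

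In detail, I would first invoke the theorem of Section \ref{Liealgebras}: when $G$ is compact (and semisimple, so that the Killing form $\kappa_\gog$ is negative definite and every $\Ad(G)$-orbit is closed), the infinitesimal commutator map $\comm_\G\colon \G\times\G\to\G$ is open at $(0,0)$. This rests, in turn, on the two lemmas ensuring that every maximal toral subalgebra $\got$ of $\gog$ admits a maximal toral subalgebra orthogonal to it with respect to $\kappa_\gog$ --- verified by the explicit roots-of-unity construction in the type $A$ case, which serves as the base of the induction, and by a reduction along a maximal Levi subalgebra in the remaining simple types.

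Second, I would feed this into the theorem of Section \ref{Liegroups}, which asserts that $\Comm_G$ is open at $(\id,\id)$ as soon as $\comm_\G$ is open at $(0,0)$. That implication runs through the auxiliary map $C_\G\colon (x,y)\mapsto x-\exp(\ad_y)x$ --- equivalent to $\comm_\G$ near the origin via the local diffeomorphism $\phi$ --- together with the Rouvi\`ere/Kashiwara--Vergne refinement of the Baker--Campbell--Hausdorff formula recorded in Proposition \ref{KW}. Chaining the two statements yields openness of $\Comm_G$ at $(\id,\id)$.

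No genuine obstacle remains at this stage: the whole difficulty of the argument was absorbed into the two cited theorems. The only point worth keeping an eye on is that compactness is used essentially in both ingredients --- in Section \ref{Liealgebras} to pass to $\Ad(G)$-stable neighbourhoods and to exploit that the centralizer of a regular element is a maximal toral subalgebra, and, through that same input, in Section \ref{Liegroups} --- so that the hypotheses of the corollary are exactly what is required.
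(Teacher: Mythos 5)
Your proposal is correct and is exactly the argument the paper intends: the corollary follows immediately by combining the theorem of Section \ref{Liealgebras} (openness of $\comm_\G$ at $(0,0)$ for compact $G$) with the theorem of Section \ref{Liegroups} (openness of $\Comm_G$ at $(\id,\id)$ given openness of $\comm_\G$). Nothing further is needed.
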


\end{document}